\newtheorem{theorem}{Theorem}[subsection]
\newtheorem{lemma}[theorem]{Lemma}
\newtheorem{proposition}[theorem]{Proposition}
\newtheorem{corollary}[theorem]{Corollary}
\newcommand{\Hom}{\mathrm{Hom}}
\newcommand{\PP}{\mathbb{P}}
\newcommand{\RR}{\mathbb R}
\newcommand{\QQ}{\mathbb Q}
\newcommand{\CC}{\mathbb{C}}
\newcommand{\ZZ}{\mathbb{Z}}
\newcommand{\rk}{{\operatorname{rk}}}
\newcommand{\n}{\noindent}
\theoremstyle{definition}
\newtheorem{definition}[theorem]{Definition}
\theoremstyle{remark}
\newtheorem{remark}[theorem]{Remark}
\date{}
\begin{document}
\title{Cubic fourfolds containing a plane and $K3$ surfaces of Picard rank two} 

\author{Federica Galluzzi \\ \thanks{
Federica~Galluzzi 
Dipartimento di Matematica 
Universit\`a di Torino,
Via Carlo Alberto n.10 ,Torino 
10123, ITALIA. 
e-mail: \texttt{federica.galluzzi@unito.it}}}

\maketitle

\begin{abstract}

\n
We present some new examples of families of cubic hypersurfaces in $\PP ^5 (\CC)$ containing a plane whose associated quadric bundle does not have a rational section.  

\smallskip
\n
{\it Mathematics Subject Classification (2010)}:  14E08,14C30,14J28,14F22.
\end{abstract}

\section{Introduction}\label{intro}

Let $X$ be a smooth cubic hypersurface in $\PP^5(\CC) .\,$ Investigating the rationality of $X$
is a classical problem in algebraic geometry. The general $X$ is conjectured to be
not rational but not a single example of non rational cubic fourfold is known.

Cubic fourfolds containing a quartic scroll or a quintic del Pezzo surface are rational (see \cite{F}, \cite{Mo}). Idem for those fourfolds containing a plane and a Veronese surface (see \cite{Tr}).
Beauville and Donagi showed in \cite{BD} that also pfaffian cubic fourfolds are rational.

The closure of the locus of pfaffian cubic fourfolds is a divisor $\mathcal C_{14}$ in the moduli space $\mathcal C $
of all cubic fourfolds, while the fourfolds containing a plane form a divisor $\mathcal C _{8}\,$ (see \cite{H2}). The general fourfold containing a plane is also expected to be non rational.
Nevertheless, Hassett showed in \cite{H1} that there exists a countable infinite collection of divisors in $\mathcal C_8$ which parameterize rational cubic fourfolds. The fourfolds containing a plane are birational
to the total space of a quadric surface bundle by projecting from the plane: Hassett's examples are rational since the associated quadric bundle has a rational section. We call these hypersurfaces {\it trivially rational}. 

Auel et al. (see \cite{ABBV}) have described a divisor in $\mathcal C_8$ whose very general member parameterizes rational but not trivially rational cubic fourfolds. They are all pfaffian, so rational.
In a recent paper, Bolognesi and Russo proved that every cubic hypersurface belonging to $\mathcal C_{14}$ is rational \cite{BR}.

Using results on the Hodge structure of cubic fourfolds and  $K3$ surfaces, we present a family  of cubic fourfolds containing a plane which are not trivially rational.
We don't know if these fourfolds are rational. The rational example in \cite{ABBV} is in our family.

The paper is organized as follows. In Sections \ref{lat} and \ref{k3rk2} we recall some basic notions on lattices and $K3$ surfaces. We focus on $K3$ surfaces of Picard rank two recalling the fundamental work of Nikulin in \cite{N}. Then in \ref{dprk2} we present the $K3$ surfaces of Picard rank two  which are double covers of the plane ramified over a sextic curve. In \ref{defsd} we construct a family $S_{(b,c)}$ of double planes with Picard rank two.  In Section \ref{4foldsplane} we recall how these surfaces are related to cubic $4-$folds containing a plane. Such a cubic $X$ is birational to a quadric bundle $Y \stackrel{\pi}{\longrightarrow}  \PP ^2$ which, in the general case, ramifies over a smooth sextic curve $C.\,$
The Hodge structure of $X$ is strictly related to the Hodge structure of the $K3$ surface $S$ obtained as a double cover of the plane ramified over $C$ and parameterizing the rulings of the quadrics in the fibration $Y \stackrel{\pi}{\longrightarrow}  \PP ^2 \,$ (see \cite[\textsection 1]{V}). We use the following fact: the lattice $A(X)$ of $2-$cycles modulo numerical equivalence on $X$ has rank three and even discriminant if $S$ has Picard rank two and even N\'eron-Severi discriminant (see \cite[\textsection 1 Proposition 2]{V}). In case of $\rk (A(X))=3$ it is known that the quadric bundle
$Y \stackrel{\pi}{\longrightarrow}  \PP ^2\,$ does not have a rational section 
if and only if the discriminant of $A(X)$ is even (see Proposition \ref{iffrk3}).

We prove that if $X$ is not trivially rational, the discriminant $d(A(X))$ is even, without restrictions on the rank of $A(X)$ (see Proposition \ref{noseceven}).

In \ref{thetacar}  we recover the cubic hypersurfaces associated to the double planes $S_{(b,c)}$  using the additional datum of an odd theta characteristic on the discriminant sextic (see \cite{B,V}).

In Theorem \ref{mainteo} we prove that the fourfolds corresponding to $S_{(b,c)}$  with $d$ even are not trivially rational. The rational example in \cite[Theorem 11]{ABBV} correspond to fourfolds associated to $S_{(2,-1)}.$

\noindent
Theorem \ref{mainteo} gives only a sufficient condition for the existence of not trivially rational $4-$folds: there are
cubic fourfolds containing a plane associated to double planes $S_{(b,c)}$ with $b$ odd which are not trivially rational   (see Proposition \ref{369}).

\section{Lattices}\label{lat} 
A {\em lattice} is a free $\ZZ$-module $L$ of finite rank  with a $\ZZ$-valued symmetric bilinear form $b_{L}(x,y).\,$ A lattice  is called {\em even} if the quadratic form $q_L$ associated to the bilinear form has only even values, {\em odd} otherwise. The {\em discriminant} $d(L)$ of a lattice is the determinant of the matrix of its bilinear form. A lattice is called {\em non-degenerate} if the discriminant is non-zero and {\em unimodular} if the discriminant is $\pm 1 .\,$ If the lattice $L$ is non-degenerate, the pair $(s_+, s_-),\,$ where $s_{\pm}$ denotes the multiplicity of the eigenvalue $\pm 1$ for the quadratic form associated to $L \otimes \RR ,\,$ is called {\em signature} of $L .\,$ Finally, we call $\, s_+ +s_- \,$ the {\em rank} of $L$ and $L$ is said {\em indefinite} if the associate quadratic form has both positive and negative values.

Given a lattice $L, \,$  the lattice $L (m)$ is the $\ZZ$-module  $L $ with bilinear form $b_{L (m)}(x,y)=mb_{L}(x,y).\,$  
An {\em isometry} of lattices is an isomorphism preserving the bilinear form. 
Given a sublattice $L ^{\prime} \subset L ,\,$ the embedding is {\em primitive} if $\displaystyle {L  \over L ^{\prime} }$
is free.

Let $L^*= Hom _{\mathbb Z}(L, \mathbb Z)= \{x \in L \otimes \mathbb Q \ : \ b_L(x,l)\in \mathbb Z, \forall l \in L  \}$ be the {\em dual} of the lattice $L.\,$ There is a natural embedding  $L \hookrightarrow L^*$ given by $l \mapsto b_L (l, -).\,$ There is the following

\begin{lemma}\cite[I,Lemma 2.1.]{BPV}\label{subl}
Let $L$ be a non-degenerate lattice. Then
\begin{enumerate}
\item $[L^* : L] = |d(L)|$
\item $[L:L^{\prime}]^2=\displaystyle{\frac{d(L^{\prime})}{d(L)},}$
where $L ^{\prime} \subset L \,$ 
is a sublattice with $\rk(L ^{\prime})=\rk(L).$
\end{enumerate}
\end{lemma}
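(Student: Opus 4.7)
My plan is to reduce both statements to elementary linear algebra over $\ZZ$ after fixing a basis. First, pick a $\ZZ$-basis $e_1,\dots,e_n$ of $L$ and let $B=(b_L(e_i,e_j))$ denote its Gram matrix; non-degeneracy makes $B$ invertible over $\QQ$ with $\det B = d(L)$. The elements $e_j^* := \sum_l (B^{-1})_{jl}\, e_l \in L\otimes\QQ$ satisfy $b_L(e_i,e_j^*)=\delta_{ij}$, and a direct check using the identity $b_L(x,e_j)=c_j$ for $x = \sum_l c_l e_l^*$ shows that $e_1^*,\dots,e_n^*$ is a $\ZZ$-basis of $L^*$. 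This is the only place where non-degeneracy is genuinely used: it guarantees that $L^*$ is itself a free $\ZZ$-module of rank $n$ sitting inside $L\otimes\QQ$.

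For part (1), I would observe that $e_i=\sum_j B_{ij}\, e_j^*$, so the inclusion $L\hookrightarrow L^*$ is represented by the matrix $B$ in the bases $(e_i)$ and $(e_j^*)$. The Smith normal form of $B$ (equivalently, the structure theorem for finitely generated abelian groups) then identifies $L^*/L$ as a finite abelian group of order $|\det B|=|d(L)|$, giving (1).

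For part (2), I would pick a $\ZZ$-basis $f_1,\dots,f_n$ of $L'$ and write $f_i=\sum_j A_{ij}\, e_j$ with $A \in \Mat_n(\ZZ)$; the same Smith normal form argument yields $[L:L']=|\det A|$. A direct computation gives the Gram matrix of $L'$ in the basis $(f_i)$ as $A B A^T$, and taking determinants yields
\[
d(L') = \det(ABA^T) = (\det A)^2\, d(L) = [L:L']^2\, d(L),
\]
which is (2). I do not expect any real obstacle: both identities ultimately amount to the statement that the determinant of an integer change-of-basis matrix measures the index of the corresponding sublattice, and the whole argument is a short bookkeeping exercise with Gram matrices once the dual basis has been constructed.
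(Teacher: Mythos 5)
Your proof is correct, and since the paper does not prove this lemma itself (it is quoted from \cite{BPV}), there is nothing to diverge from: your Gram-matrix and dual-basis argument, with Smith normal form identifying the index with $|\det|$ of the inclusion matrix, is exactly the standard proof given in that reference. The only cosmetic point is the sign: the index is $[L^*:L]=|d(L)|$ and $[L:L']^2 = d(L')/d(L)$ holds as stated because the sign of the discriminant is invariant under finite-index sublattices, which your computation $d(L')=(\det A)^2 d(L)$ in fact shows.
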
 

\smallskip
Denote by $L$  a non-degenerate even lattice. The bilinear form $b_L$ induces a $\QQ$-valued bilinear form on $L^*$ and so a finite quadratic form
\[
q_{A_L} : L^*/L \longrightarrow \QQ / 2 \ZZ
\]
called the {\em discriminant form} of $L.\,$ The group $ L^*/L:=A_L $ is the {\em discriminant group} of $L$.

\subsection{Examples.} 
\begin{enumerate}
	\item [i)]
The lattice $\langle n \rangle $ is a free $\ZZ$-module of rank one, $\ZZ  \langle e \rangle , \,$with bilinear form \break $b(e,e)=n$.

\item [ii)] The {\em hyperbolic lattice}  is the even, unimodular, indefinite lattice with $\ZZ$-module $\ZZ \langle  e_1,e_2 \rangle $ and bilinear form given by the matrix $\left( \begin{matrix}0&1 \cr 1&0 \end{matrix}\right)$. We write
\[
U = \left \{ \ZZ ^2 ,\ \begin{pmatrix}
0 & 1 \\
1 & 0
\end{pmatrix}   \right \}.
\]

\item [iii)]The lattice $E_8$ has  $\ZZ^8$ as  $\ZZ$-module and the matrix of the bilinear form is the  Cartan matrix of the root system of $E_8$. It is an even, unimodular and positive definite lattice.

\end{enumerate}

\section{K3 surfaces of rank two}\label{k3rk2}

A $K3$  surface is a
smooth projective surface $S$ with trivial canonical class and
$H^1(S,\mathcal O_S)=0.\,$

It is well known that $H^2(S,\ZZ)$ is an even, unimodular, indefinite lattice, with respect to the intersection form  on $S.\,$
It has rank $22,\,$ signature $(3,19)\,$ and it is isomorphic
to 
\[
\Lambda  := U ^{\oplus 3}\oplus E_8 (-1) ^{\oplus 2}.
\]
The lattice $\Lambda$ will be called the {\em  $K3$ lattice}. 
The Hodge numbers are $(1,20,1),\,$ (see \cite[VIII]{BPV}).
Denote by
\[
NS(S) \cong H^2(S,\ZZ) \cap H^{1,1}(S)
\]
the {\em N\'eron-Severi lattice} of $S,\,$ it is a primitive sublattice of $H^2(S,\ZZ)$. Rational, algebraic and homological equivalence coincide on a $K3$ surface.

The orthogonal
complement $T(S)$ of $NS(S)$ in $H^2(S,\ZZ)$ is the {\em transcendental lattice} of $S.\,$

The \textit{rank}
of $S$,  $\rho (S),$ is the rank of $NS(S).\,$ The Hodge Index Theorem implies that
$NS(S)$ has signature $(1,\rho(S)-1)$ and that $T(S)$ has signature $(2,20-\rho(S)).\,$
Let $l \in NS(S)$ be a class with $l^2 >0.\,$ The {\em primitive cohomology} $H^2(S,\ZZ)^0$ 
is the orthogonal complement of the lattice $<l>.$

Main tools for the study of K3 surfaces are the Torelli Theorem (see \cite{LP} and \cite{PSS}) and the Surjectivity of the Period Map (see \cite{T}). The {\em period} of $S$ is given by $[\omega _S] = \PP(H^{2,0}(S))$ in the
period domain 
\[
\Omega = \{ x \in \PP (\Lambda \otimes \CC) | x \cdot x =0,\, x \cdot \bar {x} >0 \} \subset \PP (\Lambda \otimes \CC).\,
\]
By the Torelli Theorem and the Surjectivity of the Period
Map, an element $\omega$ in the period domain determines the $K3$ surface:  given $\omega \in \Omega$ 
 there exists a  $K3$ surface $S_{\omega}$ (unique up
to isomorphism) with period $\omega$  such that
$H^2(S_{\omega}, \ZZ)$ is isometric to $\Lambda .\,$

Nikulin in \cite{N} made a deep study of lattice theory and integral quadratic forms with applications to the study of $K3$ surfaces.  We recall the following which is crucial for our purposes

\begin{theorem}{\cite[Theorem 1.14.4]{N}  \cite[Corollary 2.9]{M}}\label{unique}
If $\rho(S) \leq 10 ,\,$ then every even lattice $M$ of signature $(1,\rho-1 )$ occurs as the N\'eron-Severi group of some $K3$ surface and the primitive embedding $M \hookrightarrow \Lambda$ is unique.
\end{theorem}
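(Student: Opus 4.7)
The plan is to combine Nikulin's general theorem on primitive embeddings of even lattices into even unimodular lattices with the surjectivity of the period map for $K3$ surfaces. The statement really has two independent halves: the existence and uniqueness of a primitive embedding $M \hookrightarrow \Lambda$ is purely lattice-theoretic, while the realizability of $M$ as $N(S)$ for an actual $K3$ surface requires a transcendental argument.

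For the lattice-theoretic half I would invoke Nikulin's criterion: a non-degenerate even lattice $M$ of signature $(t_+,t_-)$ admits a primitive embedding into an even unimodular lattice $L$ of signature $(l_+,l_-)$ provided $t_+ \leq l_+$, $t_- \leq l_-$, and $\ell(A_M) \leq \rk(L) - \rk(M)$, where $\ell(A_M)$ denotes the minimal number of generators of the discriminant group $A_M$. In our case $L = \Lambda$ has signature $(3,19)$, and $M$ has signature $(1,\rho-1)$ with $\rho \leq 10$. Since $\ell(A_M) \leq \rk(M) = \rho \leq 10$, while $\rk(\Lambda) - \rk(M) = 22 - \rho \geq 12$, the signature conditions and the rank inequality both hold comfortably. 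The same inequality, being strict, is precisely what Nikulin's uniqueness statement needs in order to conclude that any two primitive embeddings of $M$ into $\Lambda$ differ by an isometry of $\Lambda$.

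To realize $M$ as $N(S)$ I would fix one primitive embedding $M \hookrightarrow \Lambda$ and set $T := M^{\perp} \subset \Lambda$, which by signature reasons has signature $(2,20-\rho)$. Inside the period domain $\Omega$ consider the locus of periods $[\omega]$ with $\omega \in T \otimes \CC$; it is a non-empty open analytic subset of $\Omega \cap \PP(T \otimes \CC)$. The periods whose orthogonal complement in $\Lambda$ is strictly larger than $M$ form a countable union of proper analytic subvarieties, so generically $\omega^{\perp} \cap \Lambda = M$. For any such $\omega$, surjectivity of the period map produces a $K3$ surface $S$ together with a marking $H^2(S,\ZZ) \cong \Lambda$ sending $H^{2,0}(S)$ to $\CC\omega$; then $N(S) = H^{1,1}(S) \cap H^2(S,\ZZ) = \omega^{\perp} \cap \Lambda = M$, as required.

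The main obstacle is the lattice-theoretic step: Nikulin's criterion, and especially its uniqueness counterpart, are delicate, and the verification of the strict inequality $\rk(M) + \ell(A_M) < \rk(\Lambda)$ under the sole hypothesis $\rho \leq 10$ is the place where the bound $10$ is used in an essential way. Once the abstract embedding is fixed, the transcendental step is a routine application of Torelli together with the surjectivity of the period map recalled above.
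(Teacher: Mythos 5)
The paper offers no proof of this statement: it is quoted verbatim from Nikulin (Theorem 1.14.4) and Morrison (Corollary 2.9), and your argument is precisely the standard one given in those references, combining Nikulin's primitive-embedding criterion with the surjectivity of the period map and a genericity argument (avoiding the countable union of loci where the period is orthogonal to more than $M$) to make the N\'eron--Severi group exactly $M$. The only point to tighten is the exact hypothesis of Nikulin's existence-and-uniqueness theorem, which is $t_+<l_+$, $t_-<l_-$ and $\ell(A_M)\le \rk(\Lambda)-\rk(M)-2$ rather than the single strict inequality you quote; since $\ell(A_M)\le\rho\le 10\le 20-\rho$, this stronger condition is still satisfied under the hypothesis $\rho\le 10$, so your verification goes through.
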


\medskip

\begin{corollary}\label{uniquerk2}All even lattices of rank $2$ and
signature   $(1,1)$   occur as the 
N\'eron-Severi lattice $NS(S)$ of some K3 surface $S$ of rank two
and the primitive embedding $NS(S)\hookrightarrow \Lambda $ is unique.  
Any such lattice has the form
\[
M = \left \{ \ZZ ^2 ,\ \begin{pmatrix}
2a & b \\
b & 2c
\end{pmatrix}   \right \}
\]
with $a\geq 0$ and $b^2-4ac >0.$
\end{corollary}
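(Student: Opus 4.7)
The plan is to deduce this corollary as a direct specialization of Theorem \ref{unique} together with a short description of the shape of an even rank-two lattice.

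First I would apply Theorem \ref{unique} with $\rho = 2$. Since $2 \le 10$, the hypothesis is automatically satisfied, and the theorem then asserts both (i) that every even lattice $M$ of signature $(1,1)$ appears as $N(S)$ for some K3 surface $S$ with $\rho(S) = 2$, and (ii) that the primitive embedding $M \hookrightarrow \Lambda$ is unique. Thus, essentially all the content of the corollary is transported verbatim from Nikulin's theorem; no further K3-theoretic input is required.

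Next I would justify the ``generic form'' displayed in the corollary. A rank-two lattice is determined up to isometry by the Gram matrix of any basis $e_1, e_2$. The evenness condition says $q_M(x) = b_M(x,x) \in 2\ZZ$ for all $x \in M$, and in particular applied to $x = e_1$ and $x = e_2$ this forces the diagonal entries $b_M(e_i,e_i)$ to be even, so they may be written $2a$ and $2c$ with $a,c \in \ZZ$. The off-diagonal entry $b_M(e_1,e_2) = b$ is an arbitrary integer (no parity restriction is imposed by the evenness of the quadratic form, as $q_M(\alpha e_1 + \beta e_2) = 2a\alpha^2 + 2b\alpha\beta + 2c\beta^2$ is automatically even for all $\alpha,\beta \in \ZZ$ whatever $b$ is). This yields the stated matrix shape.

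Finally I would verify that the signature $(1,1)$ condition, i.e. indefiniteness with exactly one positive and one negative eigenvalue, is equivalent to the determinant $4ac - b^2$ being negative; this is the only additional constraint on the parameters $(a,b,c)$ for the matrix to represent an admissible $N(S)$. There is no real obstacle in the argument: the only subtlety is to notice that evenness imposes no restriction on $b$, only on the diagonal, so the three integer parameters $a,b,c$ subject to $b^2 > 4ac$ give precisely the generic even rank-two lattices of signature $(1,1)$ to which Theorem \ref{unique} applies.
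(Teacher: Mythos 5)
Your proposal is correct and follows the same route the paper implicitly takes: the corollary is stated as an immediate specialization of Theorem \ref{unique} with $\rho=2$, and your additional remarks (evenness forces only the diagonal entries to be even, and signature $(1,1)$ amounts to $b^2>4ac$) correctly justify the displayed Gram-matrix shape, which the paper leaves unargued.
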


\smallskip

\subsection{$K3$ surfaces double planes of rank two}\label{dprk2}

A double covering of the projective plane $\varphi : S \longrightarrow \PP ^2$ branched along a smooth sextic $C$ is a $K3$ surface: $\varphi _* (\mathcal O _S) \cong \mathcal O_{\mathbb P^2} \oplus \mathcal O(3),\,$ so $H^1(S,\mathcal O _S)=0\,$ and $\omega _S \cong \varphi ^*(\omega_{\mathbb P^2}\otimes \mathcal O(3))\cong 
\mathcal O_S.\,$  
The $K3$ surface $S$ in this case is called a {\em double plane}. For general references on double planes, see \cite{En}  and \cite{S}.
An ample class $l \in NS(S)$ with $l^2=2$ is the pull-back of the class of a line in $\PP ^2$. 
If $S$ has rank two the N\'eron-Severi lattice has the form
\[
L_{(b,c)}  = \left \{ \ZZ ^2 ,\ \begin{pmatrix}
2 & b \\
b & 2c
\end{pmatrix}   \right \}.
\]

\subsubsection{Examples.} 
\begin{enumerate}
\item[i)] Consider $S$ a $K3$ surface double plane ramified over a smooth sextic with
N\'eron-Severi lattice of the form
\[
L_{(1,-1)} = \left \{ \ZZ ^2 ,\ \begin{pmatrix}
2 & 1 \\
1 & -2
\end{pmatrix}   \right \}.
\]
This can be realized by taking a double cover of the plane ramified over a sextic curve having a tritangent line $l.\,$ The pull-back of $l$ to $S$ is a divisor splitting into two irreducible components $l_1,\,l_2$ .  The corresponding divisor classes are linearly independent. Both curves are isomorphic to $l$ and $l_1 ^2 = l_2 ^2 =-2.$

\item[ii)] Analogously,  if the N\'eron-Severi lattice has the form
\[
L_{(2,-1)} = \left \{ \ZZ ^2 ,\ \begin{pmatrix}
2 & 2 \\
2 & -2
\end{pmatrix}   \right \}
\]
the corresponding double plane $S$ can be realized with a ramification sextic $C$ which is tangent to a conic $D$ in $6$ points with multiplicity two. As before, $\varphi ^*(D) = D_1 + D_2,\,$ with $D_1,\,D_2$ isomorphic to $D$ and $D_1 ^2 = D_2 ^2 =-2.$
\end{enumerate}

\smallskip
The previous examples can be generalized as follows. 
\begin{lemma}\label{defsd}
If $b > 0$ and $b^2-4c >0,$ then the lattice 
\[
L_{(b,c)} = \left \{ \ZZ ^2 , \begin{pmatrix}
2 & b \\
b & 2c
\end{pmatrix}   \right \}
\]
is the N\'eron-Severi lattice of a double plane $S_{(b,c)}\,$ with a smooth ramification sextic.
\end{lemma}

\begin{proof} 
The lattice $L_{(b,c)}$ is even and it has signature $(1,1).\,$ By Theorem \ref{unique}  and Corollary \ref{uniquerk2}, $L_{(b,c)}$
occurs as the Picard group of a $K3$ surface: denote by $S_{(b,c)}=S_{\alpha}$ the $K3$ surface
 defined by $\alpha \in \Omega $ with  
$\alpha ^{\perp} = L_{(b,c)}$ and, moreover, generic with this property, hence $L_{(b,c)} = NS(S_{(b,c)}).\,$
Let $H,\,A$ be the classes $(1,0)$ and $(0,1)$ in $NS(S_{(b,c)}),$ respectively. 
For each divisor $\Gamma$ with $\Gamma^2=-2$  we have the Picard--Lefschetz reflection $\pi_{\Gamma}$ of $NS(S_{(b,c)})$ defined by $D\mapsto D+(D\Gamma)\Gamma$. If $D'$ is another divisor on $S_{(b,c)},$ then $\pi_\Gamma(D)\pi_\Gamma(D')=DD'$, because $\Gamma^2=-2$.
The cone of big and nef divisors is a fundamental domain for the group generated by the above reflections (see for example \cite [Chapter 8, Corollary 2.11]{Huy1}). In particular, we can find divisors $\Gamma_i$ with $\Gamma_i\Gamma_j=-2\delta_{i,j}$, $i=1,\dots,l$, such that 
$$
{H'}:=H+\sum_{i=1}^l(H\Gamma_i)\Gamma_i
$$
is nef. Let
$$
{A'}:=A+\sum_{i=1}^l(A\Gamma_i)\Gamma_i.
$$
Thus $NS(S_{(b,c)})=<H,A>=<H',A'>.$ Omitting the prime in the superscript we can thus assume that $H$ is nef.

Let $H=F+M$ be its decomposition in the fixed part $F$ and the mobile part $M,\,$ then $M$ is nef too. Observe that $M^2=H^2=2$ 
(see for example \cite[Chapter 2, Remark 3.3.]{Huy1}). Since, moreover, $M$ is without fixed part by definition, it defines a double cover $\varphi : S_{(b,c)} \longrightarrow \PP ^2.\,$
The ramification curve $C$ is smooth since a point $x \in S$ is singular iff $\varphi (x)$ is a singular point of $C$ (see for example \cite[p.8]{S}).
\end{proof}

\section{Cubic $4-$folds containing a plane}\label{4foldsplane}

Let $X$ be a smooth cubic hypersurface in $\PP ^5 (\CC)  .\,$ 
Consider the cohomology group $H^4(X,\ZZ)$ and 
denote with 
\[
A(X)=H^4(X,\ZZ)\cap H^{2,2}(X)
\] 
the lattice of the middle integral cohomology Hodge classes. Those classes are algebraic  since $X$ verifies the integral Hodge conjecture (see \cite{Mu} and \cite{Zu}). 
The {\em transcendental lattice} $T(X)$ is the orthogonal complement of $A(X)$ (with respect to the intersection form on $X$).

From now on $X$ will indicate a cubic hypersurface in $\mathbb P^5$ containing a plane.
Consider the projection from the plane $P$ onto a plane in $\mathbb P^5$ disjoint from $ P.\,$ Blowing up $X$ along $ P$ one obtains a quadric bundle $\pi \ : \ Y \longrightarrow \mathbb P^2$ branched over $C,\,$ the discriminant sextic. If  $X$ does not contain a second plane intersecting $ P,\,$ the curve $C$ is smooth and this means that the quadrics of the bundle have rank $\geq 3\,$ (see \cite[\textsection 1 Lemme 2]{V}). 

Denote by $Q$ the class of such a quadric. One has $P+Q=H^2,\,$ where $H$ is the hyperplane class associated to the embedding $X \hookrightarrow \PP ^5(\CC) .\,$ The hypersurface  $X$ is said to be {\em very general} if $A(X)\ = \ <H^2,P> \ (= \ <H^2,Q>).\,$ Denote $L:=<H^2,P>^{\perp}.\,$

$X$ is rational  iff  $ Y $ is rational and a sufficient condition for the rationality of $Y$ is the existence of a rational section.

\begin{definition}
We call a cubic hypersurface $X \subset \mathbb P^5$ containing a plane {\it trivially rational} if the associated quadric bundle has a rational section.
\end{definition}

This fact may be translated in a condition on the parity of the intersection of some $2-$cycles on $X.\,$ More precisely, for a  $2-$cycle $T$ in $X$ consider the intersection index
\[
\delta(T)= T \cdot Q  .
\]
Note that $\delta(P) =-2$ and $\delta (H^2) = 2\,$  So, if $X$ is very general the index $\delta$ takes only even values.
There is the following result (see \cite[Theorem 3.1.]{H2}, \cite[Proposition 2]{ABBV}, \cite[Lemma 4.4.]{H1}).
\begin{theorem}\label{has}
A cubic fourfold $X$ containing a plane is trivially rational if and only if there exists a cycle $T$  in $A(X)$ with $\delta(T)\,$ odd. 
\end{theorem}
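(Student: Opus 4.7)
The plan is to translate ``rational section of the quadric bundle $\pi:Y\to\PP^2$'' into a numerical condition on $A(X)$ via the blowup $f:Y\to X$ along $P$, and then to bridge the gap between ``odd-degree multisection'' and ``genuine section'' using Springer's theorem on rational points of quadrics. The key cycle-theoretic compatibility is $f_*[Q_Y]=Q=H^2-P$ in $A(X)$, so by the projection formula one has $T\cdot Q = f^*T \cdot [Q_Y]$ for every $T\in A(X)$; that is, intersections with the fiber class on $X$ and on $Y$ agree.

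For the direction ($\Rightarrow$), a rational section $\sigma:\PP^2\dashrightarrow Y$ has image $\Sigma\subset Y$ meeting the generic fiber $Q_Y$ transversely in one point, so $\Sigma\cdot Q_Y = 1$. Setting $T:=f_*\Sigma \in A(X)$, the projection formula yields $\delta(T)=T\cdot Q = 1$, which is odd.

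For the direction ($\Leftarrow$), suppose $T\in A(X)$ satisfies $\delta(T)$ odd. The integral Hodge conjecture for cubic fourfolds (already cited via \cite{Mu}, \cite{Zu}) guarantees that $T$ is represented by an honest algebraic cycle. After moving and pulling back to $Y$, restriction to the generic fiber $Q_\eta$---a smooth quadric surface over $K:=\CC(\PP^2)$---produces a zero-cycle of odd degree on $Q_\eta$. Such a zero-cycle must contain a closed point whose residue field $L/K$ has odd degree, so $Q_\eta(L)\neq\emptyset$. Springer's theorem (a quadric over a field which acquires a rational point over a finite odd-degree extension already has a rational point over the ground field) then implies $Q_\eta(K)\neq\emptyset$, which is exactly the existence of a rational section of $\pi$. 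Hence $X$ is trivially rational.

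The main obstacle is the reverse direction: one has to extract, from a purely cohomological class $T$, an effective zero-cycle on $Q_\eta$ to which Springer's theorem can be applied. Algebraicity is handled by integral Hodge; a moving/intersection argument provides transversality with a generic fiber; and then the parity of total degree forces at least one closed point of odd residue degree over $K$, completing the reduction to Springer. The forward direction, by contrast, is essentially formal once the blowup cycle calculus is in place.
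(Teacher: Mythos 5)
The paper itself contains no proof of this statement: it is quoted directly from Hassett (\cite[Theorem 3.1]{H2}). Your argument is essentially the standard one behind the cited result, and your converse direction is sound as written: algebraicity of classes in $A(X)$ (the same integral Hodge input the paper invokes via \cite{Mu}, \cite{Zu}), the observation that components not dominating $\PP^2$ pair to zero with the fiber class so that the odd number $\delta(T)$ is the degree of a zero-cycle on the generic fiber $Q_\eta$ over $K=\CC(\PP^2)$, then an odd-degree closed point, Springer's theorem, and spreading out a $K$-point to a rational section.

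The step you should repair is the forward direction. From $T=f_*[\Sigma]$ the projection formula gives $\delta(T)=\langle f^*f_*[\Sigma],[Q_Y]\rangle$, and $f^*f_*[\Sigma]\neq[\Sigma]$ in general: the two classes differ by a correction supported on the exceptional divisor $E$, so the claimed equality $\delta(T)=\Sigma\cdot[Q_Y]=1$ need not hold. The fix is the blow-up decomposition $H^4(Y,\ZZ)=f^*H^4(X,\ZZ)\oplus j_*\pi_E^*H^2(P,\ZZ)$, under which $[\Sigma]=f^*T+j_*\pi_E^*(m[\ell])$ with $T=f_*[\Sigma]$ and $\ell$ a line in $P$. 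A general fiber $Q_{Y,x}$ meets $E$ in a curve mapping isomorphically onto the conic $Q_x\cap P$, so $\langle j_*\pi_E^*[\ell],[Q_Y]\rangle=(\ell\cdot(Q_x\cap P))_P=2$; hence $1=[\Sigma]\cdot[Q_Y]=\delta(T)+2m$, i.e. $\delta(T)=1-2m$, which is odd though not necessarily equal to $1$. Oddness is all the theorem requires, and the same parity computation also rules out the degenerate possibility of a section contained in $E$ (which would force $\delta(T)$ even, a contradiction). With that correction your proof is complete and follows the same route as the source the paper cites.
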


Using this Theorem it is easy to give (lattice-theoretic) hints to construct cubic fourfolds with $rk(A(X))>2$ 
and not trivially rational (see \cite[Lemma 4.4.]{H1} and \cite[Proposition 2]{ABBV}).

\smallskip

\begin{proposition}\label{iffrk3}
Let $X$ be a cubic fourfold containing a plane with $\rk(A(X))=3.\,$  
Thus $X$ is trivially rational if and only if $d(A(X))$ is odd.
\end{proposition}

\begin{proof}
The quadric bundle $\pi \ : \ Y \longrightarrow \mathbb P^2$ has a rational section if and only if there exists a cycle $T  \in A(X)$ such that  $\delta (T)$ is odd (by Theorem \ref{has}).  Since $A(X)$ has
rank $3$, the sublattice $< H^2, Q, T >$ has finite index, hence Lemma \ref{subl} implies that, if $< H^2, Q, T >$ has odd discriminant, then $d(A(X))$  is odd as well.
\end{proof}

\medskip
Our aim now is to build some geometric examples. To do this, we need to better understand the links between Hodge theory and the geometry on a cubic $4-$fold containing a plane.
Here we follow Voisin \cite[\textsection 1]{V}. 

\smallskip
Let $ \varphi :\ S \longrightarrow \mathbb {P}^2$ be the double cover branched over $C ,\,$ the discriminant sextic of the quadric bundle $Y \longrightarrow \PP ^2.\,$ The surface $S$ parameterizes the rulings of the quadrics of the fibration. Let $F$ be the Fano variety of lines in $X,\,$ the subvariety of the Grassmannian $Gr(1,5)$ parameterizing lines contained in $X.\,$ The divisor $D\subset F$ consisting of lines
meeting $ P \,$ is identified with

\medskip

$
D = \{ (l,s) \in  F \times S \ : \ l \; \mbox{is in the ruling of the quadric  parameterized by} \; \varphi(s) \}.
$

\medskip
giving a $\PP ^1-$bundle 
\begin{equation}\label{p1bundle}
f : D \longrightarrow S.
\end{equation}
The incidence graph restricted to $D$  
\[
D \times X \supset \xymatrix{
   Z_D \ar[d]^q \ar[r]^p & D\\
 X         &     }
\]
defines the Abel-Jacobi map:
\[
\alpha _D = p_* q^*  : H^4(X, \QQ) \longrightarrow  H^2(D,\QQ) 
\]
which induces an isomorphism of Hodge structures, see \cite[\textsection 1 Proposition 1]{V}. Before stating the next result, we recall that we denote by $L$ the orthogonal complement of the lattice $<H^2,P>$ in $H^4(X,\ZZ),$ where $H$ is the hyperplane class and $P$ is the class of a plane contained in $X.$ 

\begin{proposition}(\cite[ \textsection 1 Proposition 2]{V},\cite[Proposition 1]{ABBV})\label{rankdis}
Let $X$ be a smooth cubic fourfold containing a plane. Then $\alpha _D(L)\subset f^*(H^2(S,\ZZ)_0(-1)) $ is a polarized Hodge substructure of index $2$. Moreover, $\alpha _D(T(X)) \subset f^* T(S)(-1)$ is a sublattice of index
$\epsilon$ dividing $2.\,$  In particular, $\rk\, A(X) = \rk\, (NS(S)) + 1$
and $ \ d(A(X)) =(-1)^{\rho(S)-1} 2^{2(\epsilon-1)} d(NS(S))$.
\end{proposition}

We can also derive the following result, which amplifies Proposition \ref{iffrk3}.

\begin{proposition}\label{noseceven}
Let $X$ be a cubic fourfold containing a plane. If $X$ is not trivially rational, then $\alpha _D(T(X)) \subset f^* T(S)(-1)$ is a sublattice of index $2$ and $d(A(X))$ is even.
\end{proposition}

\begin{proof}
The $\mathbb P^1$-bundle $f : D \longrightarrow S$ in (\ref{p1bundle}) produces an element of order two in the Brauer group $Br(S)$ of $S.\,$  The quadric bundle associated to $X$ does not have a rational section if and only if this element is not trivial in $Br(S)$ (see \cite[Proposition 4.7.]{Ku}).
Recall that, if $S$ is a $K3$ surface, then 
\[
Br(S)\cong T(S)^* \otimes \mathbb Q /  \mathbb Z \cong \Hom(T(S),\mathbb Q /  \mathbb Z)
\]
(see for example \cite[\textsection 2.1.]{vG}). 
An element of order $2$ in $Br(S)$ defines a surjective homomorphism 
\begin{equation}\label{surjhom}
\alpha : T(S) \longrightarrow \mathbb Z /  2\mathbb Z
\end{equation}
and thus a sublattice $T_{\alpha}$ of index $2$ in $T(S)$.
Voisin \cite[\textsection 1]{V} and van Geemen \cite[\textsection  9]{vG} give a geometric realization for this element $\alpha$  (see also \cite[\textsection 2]{HVV11}). More precisely, there exists $k \in H^2(S,\ZZ)$ such that 
\[
\alpha_D (L) \cong \{v \in H^2(S,\ZZ)^0 \ : \ <v,k>_S \equiv 0 (\mbox {mod} 2) \}
\]
and $k$ induces an element $\varphi$ in $\Hom (H^2(S,\ZZ)^0, \ZZ / 2 \ZZ)$ which restricts to $\alpha$ in $T(S).$ By definition, $\ker \varphi \cong \alpha_D (L)$ and, since $\alpha_D (T(X))\subseteq \alpha_D (L),$ we have $\alpha_D (T(X))\subseteq f^* (T_{\alpha})(-1).$ Thus $\alpha _D(T(X)) \subset f^* T(S)(-1)$ is a sublattice of index $2$ and $d(A(X))$ is even by Proposition \ref{rankdis}. 
\end{proof}

\begin{remark}
The lattice $T_{\alpha}$ is isometric to the transcendental lattice $T(S,\alpha)$ of the $\alpha$-twisted Hodge structure of $S$ (see  \cite[Proposition 4.7]{Huy3} and \cite[Lemma 2.15]{Huy2}).
If $u,v \in L$ one has that $<u,v>_X=-<\alpha_D(u), \alpha_D(v)>_S$ (see \cite[Proposition 2 ii)]{V}).
Thus Proposition \ref{noseceven} implies that, if $X$ is not trivially rational, then $\alpha_D (T(X))$ is isometric to $T(S,\alpha)(-1)$. 
\end{remark}

\smallskip
\subsection{Theta-characteristics on the ramification curve $C$}\label{thetacar}
A theta-characteristic on a smooth curve $C$ is a line bundle $\kappa$ such
that $\kappa ^{\otimes 2}= K_C .\,$ We write $h^0(\kappa) := \dim H^0(C,\kappa).$

\smallskip
Denote with $Q_x$ a quadric of the bundle $Y \longrightarrow \PP ^2 .\,$ The map $x \mapsto Q_x \cap P$ gives a net of conics whose discriminant curve is a plane cubic $C_1 .\,$ The curve $C_1$ cuts a divisor $2D$  on the sextic $C$ and thus it determines an effective theta-characteristic on $C\,$ (see \cite[\textsection 1 Lemme 7]{V}). Conversely, the cubic hypersurface $X$ is determined by the curve $C$ plus an odd theta-characteristic  (see \cite[\textsection 1 Proposition 4]{V}).
The same result is implied by the following

\begin{proposition}(\cite[Proposition 4.2.]{B})\label{bdet}
Let $C$ be a smooth plane curve of degree $d$, defined by an equation $F =0$ and $\kappa$ an odd theta-characteristic on $C\,$ with $h^0(\kappa)=1.\,$ Thus, $\kappa$ admits a minimal resolution

\smallskip
\[
0 \longrightarrow \mathcal O_{\PP ^2}(-2)^{d-3} \oplus \mathcal O_{\PP ^2}(-3)\stackrel{M}{\longrightarrow} \mathcal O_{\PP ^2}(-1)^{d-3} \oplus \mathcal O_{\PP ^2} \longrightarrow \kappa \longrightarrow 0
\]
with a symmetric matrix $M\in M_{(d-2)\times (d-2)}(\CC[X_0,X_1,X_2])$ satisfying $\det M = F ,\,$ and of the form
\begin{equation}
M = \begin{pmatrix}\label{detm}
L_{1,1} & \dots & L_{1,d-3} & Q_1 \\
 \vdots &  & \vdots & \vdots \\
L_{1,d-3} & \dots & L_{d-3,d-3} & Q_{d-3} \\
Q_1 & \dots & Q_{d-3} & H
\end{pmatrix}
\end{equation}
where the forms $L_{i,j},\, Q_i, \, H$ are linear, quadratic and cubic respectively.

Conversely, the cokernel of a symmetric matrix $M$ as above is an odd theta-characteristic $\kappa$ on $C$ with $h^0(\kappa)=1.\,$
\end{proposition}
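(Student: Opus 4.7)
My strategy is to build a minimal $\mathcal{O}_{\mathbb{P}^2}$-free resolution of $\kappa$ of the stated shape, then use the identification $\kappa^{\otimes 2}\cong K_C$ to arrange the representing matrix to be symmetric, and finally read off the block decomposition and the equality $\det M=F$ from the twist data. Regarding $\kappa$ as a coherent sheaf on $\mathbb{P}^2$ supported on $C$, the Auslander--Buchsbaum formula gives projective dimension one, so $\kappa$ sits in an exact sequence $0\to E\xrightarrow{M}F\to\kappa\to 0$ with $E,F$ direct sums of line bundles. To pin down the ranks and twists I would compute $h^0(\kappa(n))$ for small $n$ via Riemann--Roch on $C$ together with Serre duality (using $K_C\cong\kappa^{\otimes 2}$). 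The hypothesis $h^0(\kappa)=1$ produces a single generator in degree $0$; the value $h^0(\kappa(1))=d$ minus the three sections coming from multiplication by $H^0(\mathcal{O}(1))$ forces $d-3$ new generators in degree $1$. Matching the resulting Hilbert function of $\bigoplus_n H^0(\kappa(n))$ with the one produced by the resolution then forces $F=\mathcal{O}(-1)^{d-3}\oplus\mathcal{O}$ and, by Euler-characteristic bookkeeping, $E=\mathcal{O}(-2)^{d-3}\oplus\mathcal{O}(-3)$.

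The next step is to apply $\mathcal{H}om_{\mathcal{O}_{\mathbb{P}^2}}(-,\mathcal{O}_{\mathbb{P}^2}(-3))$ to the resolution. Local duality identifies $\mathcal{E}xt^1(\kappa,\mathcal{O}_{\mathbb{P}^2}(-3))$ with $\mathcal{H}om_{\mathcal{O}_C}(\kappa,\omega_C)=\kappa^{-1}\otimes\omega_C\cong\kappa$, the last isomorphism using $\kappa^{\otimes 2}\cong K_C$. The dualised sequence reads $0\to F^\vee(-3)\xrightarrow{M^t} E^\vee(-3)\to\kappa\to 0$, and the numerical identities $F^\vee(-3)=E$ and $E^\vee(-3)=F$ show that the dual resolution has the same shape. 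Uniqueness of minimal free resolutions then yields an isomorphism of complexes, so $M$ and $M^t$ are conjugate by invertible matrices. Because the pairing $\kappa\otimes\kappa\to\omega_C$ induced by $\kappa^{\otimes 2}\cong K_C$ is \emph{symmetric}, the identification $\kappa\cong\mathcal{E}xt^1(\kappa,\mathcal{O}(-3))$ lifts to a symmetric self-duality of the resolution, and a suitable change of basis turns $M$ into a genuinely symmetric matrix. I expect this symmetrisation to be the main technical obstacle: promoting the abstract conjugacy $M\sim M^t$ to an honest symmetric representative requires tracking the compatibility of the Serre-duality pairing with the chosen splittings of $E$ and $F$.

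Finally, the $(i,j)$-entry of $M$ is a homogeneous form of degree $a_i-b_j$, and the possible differences $(2-1,\,2-0,\,3-1,\,3-0)=(1,2,2,3)$ give exactly the block decomposition of $(\ref{detm})$: the upper-left $(d-3)\times(d-3)$ block is linear, the border quadratic, and the bottom-right entry cubic. The determinant $\det M$ is a section of $\mathcal{O}_{\mathbb{P}^2}(d)$ whose zero locus is the support of $\mathrm{coker}(M)=\kappa$, namely $C$, so $\det M$ is a nonzero scalar multiple of $F$ and can be normalised to $F$. For the converse, any symmetric $M$ of the form $(\ref{detm})$ with $\det M=F$ has cokernel a rank one Cohen--Macaulay sheaf on $C$; its self-duality coming from $M=M^t$ identifies it with $\mathcal{E}xt^1(-,\mathcal{O}(-3))$, which by the local-duality computation above is equivalent to $\kappa^{\otimes 2}\cong K_C$. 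The single degree-zero generator exhibited by the resolution forces $h^0(\kappa)=1$, hence $\kappa$ is odd.
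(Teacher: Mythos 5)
The paper does not actually prove this statement: it is quoted verbatim (with the citation) from Beauville's \emph{Determinantal hypersurfaces}, Proposition 4.2, so there is no internal proof to compare against. Your outline is, in substance, a reconstruction of Beauville's own argument: projective dimension one via Auslander--Buchsbaum, determination of the twists from the cohomology of $\kappa(n)$, dualization by $\mathcal Hom(-,\mathcal O_{\PP^2}(-3))$ together with $\mathcal Ext^1(\kappa,\mathcal O_{\PP^2}(-3))\cong \kappa^{-1}\otimes\omega_C\cong\kappa$, and symmetrization of the resulting self-dual resolution. Two steps in your sketch are asserted rather than carried out, and they are exactly where the real content lies. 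First, pinning down the twists: you need $h^0(\kappa(-1))=0$ (which does follow from $h^0(\kappa)=1$, since a nonzero section of $\kappa(-1)$ multiplied by $X_0,X_1,X_2$ would give three independent sections of $\kappa$), and you must rule out minimal generators of $\Gamma_*(\kappa)$ in degrees $\geq 2$; Hilbert-function ``matching'' alone does not do this, whereas the self-duality of the minimal resolution (the twist sets $\{a_i\}$ and $\{3-b_j\}$ must coincide) closes the gap cleanly. Second, the symmetrization: passing from ``$M$ is conjugate to $M^t$'' to an honestly symmetric representative requires knowing that the isomorphism $\kappa\xrightarrow{\sim}\mathcal Ext^1(\kappa,\mathcal O_{\PP^2}(-3))$ is symmetric with respect to biduality and that such a symmetric isomorphism can be lifted to a symmetric homotopy equivalence of minimal resolutions; this is precisely the general machinery of Beauville's Section 1 (going back to Catanese), and you correctly identify it as the main obstacle but do not supply it. With those two lemmas filled in, your argument, including the converse direction, is the standard and correct proof of the cited result.
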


\smallskip
We can now prove our main result.

\begin{theorem}\label{mainteo}
Consider the couple $(S_{(b,c)},\kappa)$ where $S_{(b,c)}$ is a double plane defined as in Lemma \ref{defsd} and  $\kappa$ is a theta characteristic on the ramification curve $C$ with $h^0(\kappa)=1.\,$ If $b$ is even, then $(S_{(b,c)},\kappa)$ determines a cubic fourfold which is not trivially rational.
\end{theorem}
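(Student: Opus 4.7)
The plan is to combine Proposition \ref{bdet}, Proposition \ref{rankdis} and Corollary \ref{iffrk3}: the first recovers a cubic fourfold from the data $(S_{(d,c)},\kappa)$, while the latter two reduce its non-trivial-rationality to a parity computation on the N\'eron--Severi discriminant of $S_{(d,c)}$.

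First I would recover the cubic fourfold. Applying Proposition \ref{bdet} to the sextic $C$ and to the odd theta characteristic $\kappa$ with $h^0(\kappa)=1$ yields a symmetric $4\times 4$ matrix $M$ of forms on $\PP^2$ whose determinant cuts out $C$. This matrix encodes a quadric surface bundle $Y\to\PP^2$ with discriminant sextic $C$; following Voisin \cite[\textsection 1]{V}, $Y$ is birational to a smooth cubic fourfold $X\subset\PP^5$ containing a plane $P$, and the associated $K3$ double cover of $\PP^2$ branched over $C$ is exactly our $S_{(d,c)}$.

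Next I would carry out the lattice computation. Since by hypothesis $\rho(S_{(d,c)})=2$, Proposition \ref{rankdis} gives $\rk A(X)=3$, placing us in the scope of Corollary \ref{iffrk3}. The same proposition also yields
\[
d(A(X))\;=\;2^{2(\epsilon-1)}\,d(NS(S_{(d,c)})),\qquad \epsilon\in\{1,2\},
\]
while the Gram matrix of $L_{(d,c)}$ gives $d(NS(S_{(d,c)}))=4c-d^2$. When $d$ is even, $d^2\equiv 0\pmod 4$, so $4c-d^2$ is even; hence $d(A(X))$ is even regardless of which value $\epsilon\in\{1,2\}$ occurs, and Corollary \ref{iffrk3} forces $X$ not to be trivially rational.

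The one step requiring care is the opening one: one must verify that the cubic $X$ produced by the symmetric matrix construction is actually smooth and contains no second plane meeting $P$, so that Voisin's identification of the associated $K3$ is legitimate and gives back $S_{(d,c)}$. The genericity of the $K3$ in Lemma \ref{defsd}, together with the condition $h^0(\kappa)=1$ invoked in Proposition \ref{bdet}, are exactly what secure this; no further obstacle arises, and the parity argument goes through uniformly in $c$.
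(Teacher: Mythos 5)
Your proposal is correct and follows essentially the same route as the paper: recover the cubic $X$ from the determinantal representation of $C$ given by Proposition \ref{bdet}, then use Proposition \ref{rankdis} to get $\rk A(X)=3$ and $d(A(X))=2^{2(\epsilon-1)}(4c-d^2)$ even when $d$ is even, and conclude via the even-discriminant criterion (the paper cites Proposition \ref{rk2} directly, you cite Corollary \ref{iffrk3}, which contains it). The only cosmetic difference is that the paper writes out the cubic equation $\sum Z_iZ_jL_{i,j}+2\sum Z_iQ_i+H=0$ explicitly, whereas you describe the same construction via the quadric bundle following Voisin.
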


\begin{proof}
Let $C$ be the ramification curve of $S:=S_{(b,c)}\,$ and take a theta characteristic $\kappa$ on $C$ with $h^0(\kappa)=1.\,$ 
Proposition \ref{bdet} says that the curve $C$ has a determinantal representation $F= \det M =0$ with
\[
M = \begin{pmatrix}
L_{1,1} & L_{1,2} & L_{1,3} & Q_1 \\
L_{1,2}& L_{2,2} & L_{2,3} & Q_2 \\
L_{1,3} & L_{2,3} & L_{3,3} & Q_3 \\
Q_1 & Q_2 & Q_3 & H
\end{pmatrix}. \ 
\]
The geometric interpretation is the following. Choose projective coordinates $[Z_1,Z_2,Z_3,X_0,X_1,X_2]$ in $\PP ^5(\CC)$ and define the cubic fourfold $X=X(S,\kappa)$ as the zero set 
\[
\sum _{i,j=1} ^3  Z_i Z_j L_{i,j}(X_0,X_1,X_2) + \sum _{i=1} ^3 2  Z_i Q_i(X_0,X_1,X_2)  +H(X_0,X_1,X_2) =0.
\]
The cubic $X$ is smooth and it contains the plane $P := \{X_0=X_1=X_2=0\}. \,$
The curve $C$ is the discriminant of the quadric bundle obtained by projecting the hypersurface $X$ from  $P.$ 

\smallskip
The $K3$ surface $S$ has rank two and $b$ is even, so the discriminant of $NS(S)$ is even. 
This means that $A(X)$ has rank three and even discriminant by Proposition \ref{rankdis}.
That $X$ is not trivially rational follows now from Proposition \ref{iffrk3}. 
\end{proof}

\begin{remark}
Auel et al. in \cite{ABBV} (see Theorem 11) show an explicit example of a pfaffian (hence rational) cubic fourfold associated to a $K3$ surface of type $S_{(2,-1)}.\,$ 
\end{remark}

Theorem \ref{mainteo} gives only a sufficient condition for the existence of not trivially rational $4-$folds. 

\begin{proposition}\label{369}
There exist double planes $S_{(b,c)}$ with $b$ odd determining cubic fourfolds containing a plane which are not trivially rational.
\end{proposition}

\begin{proof}
In \cite[Theorem 4]{ABBV} it is proved that the general fourfold $X$ in one of the irreducible components of $\mathcal C_8 \cap \mathcal C_{14}$ has $A(X)$ with intersection matrix given by  
\begin{equation}
\begin{array}{c|ccc}
& H^2 & P & T\\
\hline 
H^2 & 3 & 1 & 4\\
P & 1 & 3 & 2 \\
T & 4 & 2 & 10 
\end{array} 
\end{equation}
The discriminant sextic $C$ of the quadric bundle associated to $X$ is smooth and let $S=S_{(b,c)}$ the double plane branched on $C.\,$ Since $d(A(X)=36$,  $X$ is not trivially rational by Proposition \ref{iffrk3}. Thus, $d(NS(S)))=-9$ by Proposition \ref{rankdis} and  Proposition \ref{noseceven}. We conclude that $b$ is odd .
\end{proof}

\begin{remark}
Actually, the cubic in the previous example is already known to be rational since it is a pfaffian.
\end{remark}
\bigskip

\centerline{Acknowledgement}
The author would like to thank  Bert van Geemen, Edoardo Sernesi and Michele Bolognesi for helpful remarks and useful discussions. The author warmly thanks the referee for the valuable comments which helped to improve the manuscript and for pointing out some mistakes to correct.
The author is supported by the framework PRIN 2010/11 "Geometria delle Variet\`a
Algebriche", cofinanced by MIUR. Member of GNSAGA.

\bibliographystyle{amsplain}

\begin{thebibliography}{4zu}  


\bibitem[ABBV]{ABBV} A. \ Auel,  M. \ Bernadara, M. \ Bolognesi, A. \ V\'arilly-Alvarado ,   
{\em Cubic fourfolds containing a plane and a quintic del Pezzo surface}, Algebraic Geometry, {\bf 1} (2014), 153-181. 

\bibitem[BPV]{BPV} W. \ Barth, C.\ Peters, A. \ Van de Ven, Compact complex
surfaces, Ergebnisse der Mathematik und ihrer
Grenzgebiete, Springer-Verlag (2004).


\bibitem[B]{B}
A. \ Beauville, {\em Determinantal hypersurfaces},
Michigan Math. J. {\bf 48}, no.1 (2000), 39-64.

\bibitem[BD]{BD} A. \ Beauville, R. \ Donagi, {\em La vari\'et\'e des droites d'une hypersurface cubique
de dimension $4$}, C. R. Acad. Sci. Paris Ser. I Math., {\bf 301} (1985), 703-706.

\bibitem[BR]{BR}  M. \ Bolognesi, F. \ Russo, {\em Some loci of rational cubic fourfolds}
,with an appendix by Giovanni Staglian\`o, preprint arXiv:1504.05863, (2015).
\bibitem[En]{En}  F. \ Enriques, {\em Sui piani doppi di genere uno}, Memorie della Societ\`a Italiana delle Scienze, s. III, t. X (1896),  201-222.  

\bibitem[F]{F} G. \ Fano, {\em Sulle forme cubiche dello spazio a cinque dimensioni contenenti rigate del quarto ordine},
Comment. Math. Helv. {\bf 15} (1943), 71-80.

\bibitem[H1]{H1}
B. \ Hassett, {\em  Some rational cubic fourfolds}, J. Algebraic Geom. {\bf 8} no. 1 (1999),  103-114. 

\bibitem[H2]{H2}
B. \ Hassett,  {\em Special cubic fourfolds}, Compositio Math. {\bf 120} no.1 (2000),  1-23.

\bibitem[HVV11]{HVV11} B. \ Hassett, A.\ V\'arilly-Alvarado, and P. V\'arilly, {\em Transcendental obstructions to weak approxima-
tion on general K3 surfaces}, Adv. Math. {\bf 228} no. 3 (2011),  1377-1404.

\bibitem[Huy1]{Huy1} D. \ Huybrechts, { \em Lectures on K3 surfaces}. To appear: draft notes freely available at {\tt http://www.math.uni-bonn.de/people/huybrech/K3Global.pdf}\ .

\bibitem[Huy2]{Huy2} D. \ Huybrechts,  {\em K3 category of a cubic fourfold} 
preprint arXiv:1505.01775, (2015).

\bibitem [Huy3]{Huy3} D.\ Huybrechts, {\em Generalized Calabi–Yau structures, K3 surfaces, and B-fields}, Int. J. Math. {\bf 19} (2005), 13–36.
 

\bibitem[Ku]{Ku} A. \ Kuznetsov, {\em Derived categories of cubic fourfolds}, in "Cohomological and Geometric Approaches to Rationality Problems. New Perspectives". Progress in Mathematics {\bf 282} F. \ Bogomolov; Y. \ Tschinkel, (Eds.) 2010.

\bibitem[LP]{LP} E. \ Looijenga,  C. \ Peters,  {\em A Torelli theorem for K3
surfaces} , Comp. Math. {\bf 42} (1981), 145-186.


\bibitem[Mo]{Mo} U. \ Morin , {\em Sulla razionalit\`a  dell'ipersuperficie cubica generale dello
spazio lineare a cinque dimensioni}, Rend. Sem. Mat. Univ. di Padova, {\bf 11} (1940)  108-112.


\bibitem[M]{M}D.R. \ Morrison {\em On $K3$ surfaces with large Picard number},
Invent. Math. {\bf 75}   (1984) 105-121.


\bibitem[Mu]{Mu} J.P. \ Murre, {\em On the Hodge conjecture for unirational fourfolds}, Indag.
Math. {\bf 39}, no. 3, (1977), 230-232.

\bibitem[N]{N}
V. \ Nikulin {\em Integral symmetric bilinear forms and some of
their applications}, Math. USSR Izv. {\bf 14} (1980) 103-167.

\bibitem[PSS]{PSS} I: \ Piatechki-Shapiro, I.R. \ Shafarevich ,  {\em A Torelli theorem
for algebraic surfaces of type K3}, Math. USSR
Izvestija {\bf 5} (1971), 547-588.

\bibitem[S]{S}
E. \ Sernesi {\em Introduzione ai piani doppi}, in Seminario di Geometria 1977/78, Centro di Analisi Globale, Firenze (1979) 1-78.

\bibitem[T]{T} A.N. \ Todorov, {\em Applications of the K\"ahler-Einstein-
Calabi-Yau metric to moduli of K3 surfaces} , Invent.
Math. {\bf 61} (1980), 251-265.

\bibitem[Tr]{Tr}
S. \ L. \ Tregub, {\em Three constructions of rationality of a cubic fourfold}, Moscow Univ. Math. Bull. {\bf 39}  no. 3, (1984), 8-16.

\bibitem[vG]{vG}
B. \ van Geemen, {\em Some remarks on Brauer groups of K3 surfaces}, Adv. Math. {\bf 197}, no. 1, (2005) 222-247.

\bibitem[V]{V}
C. \ Voisin , {\em Th\'eor\`eme de Torelli pour les cubiques de $ P\sp 5$. (French) [A Torelli theorem for cubics in ${\bf P}\sp 5$]}, Invent. Math. {\bf 86}, no. 3, (1986), 577-601.

\bibitem[Zu]{Zu}
S. \ Zucker, {\em The Hodge conjecture for cubic fourfolds}, Compositio Math., {\bf 34},  no. 2, (1977), 199-209.

\end{thebibliography}

\bigskip

\end{document}